\newtheorem{theorem}{Theorem}[section]
\newtheorem{Lemma}[theorem]{Lemma}
\newtheorem{Prop}[theorem]{Proposition}
\newcommand{\Fq}[1][]{\mathbb{F}_{q^{#1}}}
\newcommand{\Z}{\mathbb{Z}}
\newcommand{\Q}{\mathbb{Q}}
\DeclareMathOperator{\End}{End}
\DeclareMathOperator{\Cof}{Cof}
\DeclareMathOperator{\ICM}{ICM}
\begin{document}
\title{Cyclic abelian varieties over finite fields in ordinary isogeny classes}
\author{Alejandro J. Giangreco-Maidana}
\address{Aix Marseille University, CNRS, Centrale Marseille, I2M UMR 7373, 13453 Marseille, France \\
ORCID: https://orcid.org/0000-0002-1521-3266 }
\email{ajgiangreco@gmail.com}
\date{\today}

\begin{abstract}
Given an abelian variety $A$ defined over a finite field $k$, we say that $A$ is \emph{cyclic} if its group $A(k)$ of rational points is cyclic. In this paper we give a bijection between cyclic abelian varieties of an ordinary isogeny class $\mathcal{A}$ with Weil polynomial $f_{\mathcal{A}}$ and some classes of matrices with integer coefficients and having $f_{\mathcal{A}}$ as characteristic polynomial.
\end{abstract}
\keywords{group of rational points, cyclic, ordinary abelian variety, finite field, class of matrices}
\subjclass{Primary 11G10, 14G15, 14K15}

\maketitle

\section{Introduction}
The group $A(k)$ of rational points of an abelian variety $A$ defined over a finite field $k=\Fq$ is a finite abelian group, and it has theoretical and practical interests. More precisely, the group structure of $A(k)$ and some statistics are research topics in the literature. 

The structure of all possible groups for elliptic curves defined over finite fields was independently discovered in \cite{SCHOOF1987183}, \cite{ruck1987note},\cite{tsfasman1985group} and \cite{voloch1988note}. For higher dimensions, Rybakov gives in \cite{Rybakov2010} a very explicit description of all possible groups of rational points of an abelian variety in a given isogeny class, his result is formulated in terms of the characteristic polynomial of the isogeny class.

Cyclic subgroups of the group $A(k)$ of rational points of an abelian variety $A$ defined over a finite field $k$ are suitable for multiple applications, in particular for cryptography, where the Discrete Logarithm Problem is exploited.
We say that an abelian variety $A$ is \textbf{cyclic} if its group $A(k)$ of rational points is cyclic. Vl{\u{a}}du{\c{t}} studied the cyclicity of elliptic curves in \cite{VLADUT199913} and \cite{VLADUT1999354}. The higher dimensional case was studied by the author in \cite{GIANGRECOMAIDANA2019139} and \cite{GIANGRECOMAIDANA2020101628}, when varieties are grouped in their isogeny classes.

Deligne's functor describes an equivalence of categories between ordinary abelian varieties and modules over $\Z$ with certain properties. A classical result of Latimer and MacDuffee gives a bijection between certain classes
of matrices with integer coefficients and certain classes of fractional ideals. Combining these two results with a criterion for cyclic varieties based on their endomorphisms rings, we obtain our main result. 

Apart from theoretical interests, our result could be interesting from a computational point of view.

\section{Preliminaries and Statement of the result}
\subsection*{Abelian varieties over finite fields}
For the general theory of abelian varieties see for example \cite{mumford1970abelian}, and for precise results over finite fields, see \cite{Waterhouse1969}.

Let $q=p^r$ be a power of a prime, and let $k=\Fq$ be a finite field with $q$ elements. Let $A$ be an abelian variety of dimension $g$ over $k$. For an extension field $K$ of $k$, we denote by $\End_K (A)$ the ring of $K$-endomorphism of $A$ and by $\End^0_K (A) =(\End_K (A))\otimes\mathbb{Q}$ its endomorphism algebra, the latter being an invariant of its isogeny class $\mathcal{A}$, we can denote it by $\End^0_K(\mathcal{A})$.
For an integer $n$, denote by $A[n]$ the group of $n$-torsion points of $A(\overline{k})$. It is known that
\begin{align}
\begin{split}\label{eqn:torsion_points}
A[n]&\cong (\Z/n\Z)^{2g}, \qquad p\nmid n\\
A[p]&\cong (\Z/p\Z)^i, \qquad 0\leq i \leq g.
\end{split}
\end{align}

For a fixed prime $\ell$ ($\neq p$), the $A[\ell^n]$ form an inverse limit system under $A[n+1] \overset{\ell}{\rightarrow} A[n]$, and we can define \textbf{the Tate module} $T_\ell(A)$ by $\varprojlim A[\ell^n](\overline{k})$. This is a free $\Z_\ell$-module of rank $2g$ and the absolute Galois group $\mathcal{G}$ of $\overline{k}$ over $k$ operates on it by $\Z_\ell$-linear maps.

The Frobenius endomorphism $F$ of $A$ acts on $T_\ell(A)$ by a semisimple linear operator, and its characteristic polynomial $f_{A}(t)$ is called \textbf{Weil polynomial of} $A$ (also called \textbf{characteristic polynomial of} $A$). The Weil polynomial is independent of the choice of the prime $\ell$. Tate proved in \cite{tate1966} that an isogeny class $\mathcal{A}$ of abelian varieties is determined by its characteristic polynomial $f_{\mathcal{A}}(t)$. If $\mathcal{A}$ is simple, $f_{\mathcal{A}}(t)=h_{\mathcal{A}}(t)^e$ for some irreducible polynomial $h_{\mathcal{A}}$ and the center of $\End^0_k(\mathcal{A})$ is isomorphic to the number field $\mathbb{Q}(F)\cong \mathbb{Q}[t]/ (h_{\mathcal{A}}(t))$.
The cardinality of the group $A(k)$ of rational points of $A$ equals $f_{\mathcal{A}}(1)$, and thus, it is an invariant of the isogeny class. 

An abelian variety $A$ is \textbf{ordinary} if one of the following equivalent conditions is satisfied:
\begin{enumerate}
\item
$A$ has $p^g$ points of order dividing $p$ with coefficients in $\overline{k}$;
\item
The neutral component of the group scheme $A_p$, the kernel of multiplication by $p$, is of multiplicative type;
\item
At least half of the roots of $f_A$ are $p$-adic unities.
\end{enumerate}
The ordinariness is an invariant of the isogeny class.

\subsection*{Matrices}
Let us define some more notations. As usual, we denote by $M_n(\mathbb{Z})$ the set of square matrices of dimension $n\times n$ with integer entries. We define the \textbf{conjugacy classes of matrices} $\mathfrak{Cl}(M_n(\mathbb{Z}))$ as the quotient $M_n(\mathbb{Z})/\sim$ given by the equivalence relation $A\sim B$ if and only if $A=UBU^{-1}$ for some $U\in GL_n(\mathbb{Z})$, where $GL_n(\mathbb{Z})$ denotes the subset of $M_n(\mathbb{Z})$ of invertible matrices. 

For a polynomial $f$, we denote by $M_{n,f}(\mathbb{Z})$ the set of matrices in $M_n(\mathbb{Z})$ with $f$ as characteristic polynomial. In a similar way, we define $\mathfrak{Cl}(M_{n,f}(\mathbb{Z}))$, it is easy to see that this is well defined.

For a matrix $A\in M_n(\mathbb{Z})$, $\gcd(A)$ is the greatest common divisor of all entries of $A$ and the cofactor $\Cof(A)$ of $A$ is the matrix whose $ij$-entry is $(-1)^{i+j}$ times the determinant of the matrix that results from the elimination of the $i$-th row and the $j$-th column of $A$. 
Let us define the following map

\begin{center}
\begin{tabular}{c c c l}
$\tau$: &   $M_n(\mathbb{Z})$ &$\rightarrow$ &$\mathbb{Z}$ \\
    & $A$ &$\mapsto$ &$\gcd(\Cof(A))$.
\end{tabular}
\end{center}

The map $\tau$ can be extended to $\mathfrak{Cl}(M_n(\mathbb{Z}))$ since $\gcd(AU)=\gcd(A)$ if $\det (U)=\pm 1$ and $\Cof(AB)=\Cof(B)\Cof(A)$. 

\subsection*{Statement of the result}
If we denote by $\mathfrak{nc}(\mathcal{A})$ the subset of not cyclic varieties in an isogeny class $\mathcal{A}$, then our main result states:

\begin{theorem}\label{thm:bijection}
Let $\mathcal{A}$ be a $g$-dimensional isogeny class of ordinary simple abelian varieties. Let $f=f_{\mathcal{A}}$ be the Weil polynomial of $\mathcal{A}$. Then we have bijections between $\mathcal{A}$ and $\mathfrak{m}_{f,1}$, and between $\mathfrak{nc}(\mathcal{A})$ and $\mathfrak{m}_{f,2}$, where
\[
\mathfrak{m}_{f,c}:=\{ M\in \mathfrak{Cl}(M_{2g,f}(\mathbb{Z})),\; q^{g-1}|\tau(M),\; (\tau(1-M),f(1))\geq c\}.  
\]
\end{theorem}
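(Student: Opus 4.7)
The plan is to chain together three classical inputs: Deligne's equivalence between ordinary abelian varieties over $k$ and Deligne modules, the Latimer--MacDuffee correspondence between conjugacy classes of integer matrices with a given characteristic polynomial and fractional ideal classes of the corresponding order, and a Smith normal form computation detecting cyclicity of $A(k)$.

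First I would exploit that $\mathcal{A}$ is simple and ordinary, so $f$ is irreducible and $K=\Q(F)=\Q[t]/(f(t))$ is a number field of degree $2g$. Under Deligne's equivalence each $A\in\mathcal{A}$ corresponds, up to isomorphism, to a fractional $\Z[F]$-ideal $I\subset K$ stable under $V:=q/F$, with isomorphisms of varieties matching equivalence of ideals. Forgetting the $V$-structure and applying Latimer--MacDuffee, fractional $\Z[F]$-ideal classes are in bijection with $\mathfrak{Cl}(M_{2g,f}(\Z))$, the representing matrix $M$ being multiplication by $F$ on some $\Z$-basis of $I$. The $V$-stability $VI\subseteq I$ translates to $qM^{-1}\in M_{2g}(\Z)$; since $|\det M|=|f(0)|=q^g$ and $M^{-1}=\det(M)^{-1}\mathrm{adj}(M)$ with entries of $\mathrm{adj}(M)$ being, up to sign, entries of $\Cof(M)$, this is equivalent to $q^{g-1}\mid\tau(M)$.

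For cyclicity I would use the identification $A(k)\cong T(A)/(F-1)T(A)$, which in the chosen basis is the cokernel of the integer matrix $I-M$. If $d_1\mid\cdots\mid d_{2g}$ are its invariant factors, the standard Smith normal form formulas give $d_1\cdots d_{2g}=|\det(I-M)|=|f(1)|$ and $d_1\cdots d_{2g-1}=\tau(1-M)$. So $A(k)$ is cyclic iff $d_1=\cdots=d_{2g-1}=1$ iff $\tau(1-M)=1$. Since $\tau(1-M)$ always divides $f(1)$, the inequality $\gcd(\tau(1-M),f(1))\geq c$ reduces to $\tau(1-M)\geq c$: for $c=1$ it is automatic (because $f(1)\neq 0$ forces at least one nonzero cofactor of $I-M$) and therefore recovers all of $\mathcal{A}$; for $c=2$ it isolates $\mathfrak{nc}(\mathcal{A})$.

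The main obstacle will be making the chain of dictionaries rigorous at the necessary level of precision: in particular, that Deligne's equivalence really identifies $A(k)$ with the cokernel of $F-1$ on the Deligne module, so that the Smith normal form analysis applies verbatim, and that the $V$-stability of a $\Z[F]$-ideal is captured exactly by $q^{g-1}\mid\tau(M)$ with no hidden local obstruction. Once these compatibilities are verified, the two stated bijections follow by imposing $c=1$ and $c=2$ in $\mathfrak{m}_{f,c}$.
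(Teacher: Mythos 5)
Your argument is correct, and the first bijection ($\mathcal{A}\leftrightarrow\mathfrak{m}_{f,1}$) is exactly the paper's route: Deligne's equivalence, Latimer--MacDuffee, and the translation of stability under $V=q/F$ into $qM^{-1}\in M_{2g}(\Z)$, hence $q^{g-1}\mid\tau(M)$ since $\det M=f(0)=q^g$. For the second bijection, however, you take a genuinely different path. The paper never touches the group $A(k)$ directly: it characterizes non-cyclicity by the existence of $\varphi\in\End_{\Fq}(A)$ with $[f(1)/\ell]=\varphi\circ(1-F)$ (its Lemma on the endomorphism criterion, proved via separability of $1-F$), identifies $\End_{\Fq}(A)$ with the multiplicator ring $(I_A:I_A)$ via Deligne, and then detects membership of the extra element $\sigma_\ell=f(1)/(\ell(1-\alpha))$ in the multiplicator ring through an extension of Latimer--MacDuffee (its Proposition, part ii), which produces exactly the condition $\ell\mid\tau(1-M)$ for some prime $\ell\mid f(1)$, i.e. $(\tau(1-M),f(1))\geq 2$. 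You instead compute $A(k)$ itself as $\mathrm{coker}(1-M)$ and read off cyclicity from the Smith normal form, using $d_1\cdots d_{2g-1}=\tau(1-M)$; your observation that $\tau(1-M)$ divides $\det(1-M)=f(1)$, so the gcd condition is just $\tau(1-M)\geq c$, is a nice clarification the paper does not make explicit, and your method in fact yields the full group structure, not only cyclicity. The price is the compatibility you flag yourself: that Deligne's functor identifies $A(k)$ with $T(A)/(F-1)T(A)$, including at the prime $p$. For $\ell\neq p$ this is the standard Tate-module computation, but at $p$ it requires the ordinariness and an argument through the \'etale part of the $p$-divisible group of the Serre--Tate canonical lift (or a citation to the literature where this is proved); the paper's endomorphism-theoretic route is designed precisely to avoid needing this input. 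So: correct and workable, but to be complete you must supply or cite that one compatibility, whereas the paper's chain closes using only its cyclicity lemma and the multiplicator-ring statement in Deligne's theorem.
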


\section{The proof}
Basically, the proof uses a version of the Deligne's functor and a classical result of Latimer and MacDuffee. Even if we will not use the classical version of Deligne's category, we will describe it later.

\subsection*{Fractional ideals}
Given a number field $K$, an \textbf{order} $\mathcal{O}$ in $K$ is a ring of $K$ which is finitely generated as a $\Z$-module and such that its fields of fractions equals $K$. The ring of integers $\mathcal{O}_K$ of $K$ is the maximal order of $K$. Given an algebraic integer $\theta$, $\Z[\theta]$ is an order in $\Q(\theta)$. The definition of order can be extended to finite-dimensional algebras over $\Q$.
Given an order $\mathcal{O}$ in a number field $K$, a \textbf{fractional $\mathcal{O}$-ideal} (or a \textbf{fractional ideal of} $\mathcal{O}$) is a nonzero finitely generated $\mathcal{O}$-module of $K$. Every fractional $\mathcal{O}$-ideal can be written as $\alpha\mathfrak{a}$, where $\mathfrak{a}$ is an (integral) ideal of $\mathcal{O}$ and $\alpha\in K^*$. Observe that every fractional ideal is also a finitely generated $\Z$-module. Denote by $I(\mathcal{O})$ the set of all fractional $\mathcal{O}$-ideals. Given two $\mathcal{O}$-fractional ideals $\mathfrak{a}$ and $\mathfrak{b}$, the product $\mathfrak{a}\mathfrak{b}$, the sum $\mathfrak{a}+\mathfrak{b}$, the intersection $\mathfrak{a}\cap \mathfrak{b}$, and the ideal quotient
\[
    (\mathfrak{a}:\mathfrak{b}) :=\{\alpha\in K: \alpha \mathfrak{b}\subset \mathfrak{a} \}
\]
are fractional $\mathcal{O}$-ideals. Note that if we have orders $\mathcal{O}' \subset \mathcal{O} \subset \mathcal{O}''$ and $\mathfrak{a}$ is a fractional $\mathcal{O}$-ideal, then $\mathfrak{a}$ is a fractional $\mathcal{O}'$-ideal, and if $\mathfrak{a}\mathcal{O}''\subset \mathfrak{a}$, then $\mathfrak{a}$ is also a fractional $\mathcal{O}''$-ideal. We have that $(\mathfrak{a}:\mathfrak{a})$ is a ring, and it is called the \textbf{multiplicator ring of} $\mathfrak{a}$. It is an order in $K$, and it is the biggest order such that $\mathfrak{a}$ is a fractional $(\mathfrak{a}:\mathfrak{a})$-ideal.

A \textbf{principal} $\mathcal{O}$-ideal is a fractional $\mathcal{O}$-ideal of the form $\lambda \mathcal{O}$ for some $\lambda\in K^*$. The \textbf{ideal class monoid} $\ICM(\mathcal{O})$ of an order $\mathcal{O}$ is the set $I(\mathcal{O})$ quotiented by the set $P(\mathcal{O})$ of principal fractional ideals of $\mathcal{O}$. One verifies easily that $\ICM(\mathcal{O})$ is well defined. We say simply that two fractional $\mathcal{O}$-ideals $\mathfrak{a}$ and $\mathfrak{b}$ are equivalent if they represent the same element in $\ICM(\mathcal{O})$, i.e. $\mathfrak{a}= (\alpha\mathcal{O})\mathfrak{b}$ for some $\alpha\in K^*$. Note that $(\mathfrak{a}:\mathfrak{a})=(\mathfrak{b}:\mathfrak{b})$ provided that $\mathfrak{a}$ is equivalent to $\mathfrak{b}$.

\subsection*{Deligne Category}
By Deligne functor (see its original paper \cite{deligne1969varietes} for details), we have an equivalence between the category of ordinary abelian varieties over $\Fq$ and some kind of modules over $\Z$. We will describe now this functor.

The \textbf{Deligne category} $\mathcal{L}_q$ is the category of pairs $(T,F)$, where $T$ is a finitely generated free module over $\mathbb{Z}$ and $F$ is an endomorphism of $T$ satisfying the following conditions:
\begin{enumerate}
\item
the endomorphism $F\otimes \mathbb{Q}$ of $T\otimes \mathbb{Q}$ is semisimple and its eigenvalues have absolute value $\sqrt{q}$,
\item
at least half of the roots of the characteristic polynomial of $F$ in $\overline{\mathbb{Q}}_p$, counting multiplicity, are $p$-adic units,
\item
there is an endomorphism $V$ of $T$ such that $FV=q$.
\end{enumerate}
A morphism of two objects $(T,F)$ and $(T',F')$ is a homomorphism $\varphi:T\rightarrow T'$ of $\mathbb{Z}$-modules such that $\varphi\circ F=F'\circ \varphi$.

We will now define a functor from the category of abelian varieties over a finite field $\overline{\mathbb{F}}_q$ to the Deligne category $\mathcal{L}_q$. Let $W=W(\overline{\mathbb{F}}_q)$ be the ring of Witt vectors over $\overline{\mathbb{F}}_q$ and let $\epsilon: W \rightarrow \mathbb{C}$ be an embedding of $W$ into the complex numbers that we now fix. Let $A$ be an abelian variety over $\Fq$ and let $A^{\#}$ be the Serre-Tate canonical lifting of $A$ to $W$ and let $F^{\#}$ be the lifting of the Frobenius of $A$ to $A^{\#}$. Put $T(A) = H_1(A^{\#}\otimes_{\epsilon} \mathbb{C})$ and denote by $F(A)$ the endomorphism of $T(A)$ induced by $F^{\#}$. The \textbf{Deligne functor} is defined by $A \mapsto (T(A),F(A))$.
\begin{theorem}[Deligne, 1969, \cite{deligne1969varietes}]
The Deligne functor $A \mapsto (T(A),F(A))$ is an equivalence of categories between the category of ordinary abelian varieties over $\Fq$ and the Deligne category $\mathcal{L}_q$.
\end{theorem}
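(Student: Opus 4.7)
The plan is to reproduce Deligne's original argument, whose backbone is the Serre--Tate theory of canonical liftings of ordinary abelian varieties. I would first verify that $A \mapsto (T(A), F(A))$ indeed lands in $\mathcal{L}_q$. For ordinary $A/\Fq$, Serre--Tate provides a canonical lift $A^{\#}$ to $W$ carrying a lift $F^{\#}$ of the Frobenius; after base change via $\epsilon$, the Betti homology $H_1(A^{\#}\otimes_{\epsilon}\mathbb{C},\Z)$ is free of rank $2g$. The three defining conditions follow readily: the Riemann hypothesis for abelian varieties yields semisimplicity with eigenvalues of absolute value $\sqrt{q}$; condition (2) matches characterization (3) of ordinariness in the preliminaries, since the characteristic polynomial of $F^{\#}$ agrees with the Weil polynomial of $A$; and $V$ arises by lifting the usual Verschiebung on $A$ through $A^{\#}$.

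For full faithfulness, I would split the argument into a lifting step and a complex-analytic step. The lifting step is Serre--Tate: the canonical-lift construction is functorial, inducing bijections $\mathrm{Hom}(A,B) \cong \mathrm{Hom}(A^{\#}, B^{\#})$ for ordinary $A, B$ over $\Fq$, automatically compatible with the Frobenius lifts. The complex-analytic step uses the classical equivalence between complex abelian varieties and polarizable integral Hodge structures of weight $-1$, together with the observation that $F(A)$ determines the Hodge filtration on $T(A) \otimes \mathbb{C}$ through its splitting into $p$-adic unit and non-unit eigenvalues, so pairs $(T(A), F(A))$ remember everything the complex $H_1$ does.

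The essentially surjective direction is the technical core. Given $(T, F) \in \mathcal{L}_q$, I would first build a complex abelian variety from $(T, F)$. Condition (1) pairs the eigenvalues of $F$ on $T \otimes \mathbb{C}$ into complex conjugates of absolute value $\sqrt{q}$, while condition (2) splits $T \otimes \overline{\mathbb{Q}}_p$ into a unit-root subspace of rank $g$ and its complement. Transporting this splitting to $T \otimes \mathbb{C}$ via $\epsilon$ and declaring one summand to be $F^0 \subset T_{\mathbb{C}}$ produces a Hodge decomposition of weight $-1$; a polarization is manufactured from the symmetry $\alpha \leftrightarrow q/\alpha$ of eigenvalues, yielding a complex abelian variety $X$ with an endomorphism $F_X$ whose action on $H_1$ is $F$. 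Finally, $(X, F_X)$ descends to $W$ as a canonical lift by Serre--Tate's criterion: $F_X$ is an isogeny of $p$-power degree (by (3)) whose action on the tangent space is topologically nilpotent (by the unit-root condition (2)). Reducing modulo the maximal ideal of $W$ yields an ordinary abelian variety over $\Fq$ realizing $(T, F)$.

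The main obstacle lies in this last descent step: one must show that the complex abelian variety produced from $(T, F)$ is algebraizable over $W$, that its special fiber is ordinary, and that the reduction's Frobenius induces the prescribed $F$ on $T$. The matching between the $p$-adic structure carried by $F$ and the crystalline Frobenius on the reduction is what makes the theorem genuinely non-trivial, and Serre--Tate's characterization of canonical lifts --- as those lifts whose formal group admits a Frobenius-equivariant multiplicative structure --- is the decisive input that glues the analytic construction to the algebraic geometry over $W$.
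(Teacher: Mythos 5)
The paper does not actually prove this statement: it is quoted with attribution to Deligne's 1969 paper (and the ideal-theoretic form used later, Theorem \ref{Th:Deligne}, is likewise quoted from the literature). So your proposal has to be measured against Deligne's original argument. Your treatment of well-definedness and of full faithfulness follows that argument in outline and is essentially sound: the canonical lift is functorial, homomorphisms of ordinary abelian varieties lift uniquely to homomorphisms of canonical lifts, and a $\Z$-linear map of lattices commuting with $F$ preserves the Hodge filtration because, for the canonical lift, that filtration is the unit-root splitting of $F$ transported through $\epsilon$.

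The genuine gap is in essential surjectivity. Having built a weight $-1$ Hodge structure on $T$ from the unit/non-unit eigenvalue splitting (which does need, and you omit, an actual polarizability argument and the check that no eigenvalue and its conjugate are both units --- the latter follows from $\alpha\cdot\overline{\alpha}=q$), you then assert that the resulting complex abelian variety ``descends to $W$ as a canonical lift by Serre--Tate's criterion.'' There is no such criterion: Serre--Tate theory is a statement about deformations and liftings from characteristic $p$, not a device for descending a complex abelian variety with an endomorphism to $W$ and controlling its reduction. What is actually needed at this point is the theory of complex multiplication: one must show the constructed variety is of CM type, hence defined over a number field, that it has good (not merely potentially good) reduction over $W$ at the place singled out by $\epsilon$, that the reduction is ordinary, and --- via the Shimura--Taniyama formula --- that the Frobenius of the reduction induces the prescribed $F$ on $T$. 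This is precisely the content of Honda's theorem, and Deligne's proof does not redo it: he invokes Honda--Tate to realize the (ordinary) characteristic polynomial of $F$ by an ordinary abelian variety over $\Fq$, which settles surjectivity up to isogeny, and then checks that the essential image is stable under passage to $F$- and $V$-stable lattices, which handles an arbitrary $T$. As written, your last paragraph names the hard matching problem but offers no tool that solves it; either quote Honda--Tate as Deligne does, or supply the CM-reduction argument in full.
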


The Deligne's equivalence is explicit in a convenient way using the language of fractional ideals:
\begin{theorem}[Deligne]\label{Th:Deligne}
Let $\mathcal{A}$ be an ordinary simple isogeny class of abelian varieties defined over $\Fq$, which is defined by the $q$-Weil polynomial $f_{\mathcal{A}}$. Let $\pi$ be a root of $f_{\mathcal{A}}$ corresponding to the Frobenius. Then
\begin{enumerate}
\item we have a bijection between $\mathcal{A}$ (up to $\Fq$-isomorphism) and $\ICM(\Z[\pi, q/\pi])$;
\item let $I_A$ be the corresponding fractional ideal of an abelian variety $A\in \mathcal{A}$, then $\End_{\Fq}(A)$ corresponds to the multiplicator ring $(I_A : I_A)$. 
\end{enumerate}
\end{theorem}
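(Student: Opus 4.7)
The plan is to invoke the Deligne functor equivalence from the preceding theorem and recast the subcategory of $\mathcal{L}_q$ with characteristic polynomial $f_{\mathcal{A}}$ entirely in terms of fractional ideals of the order $\mathcal{O}:=\Z[\pi, q/\pi]$. First I would check that $f_{\mathcal{A}}$ is irreducible when $\mathcal{A}$ is ordinary and simple: ordinariness forces the roots of $f_{\mathcal{A}}$ to come in disjoint pairs $\{\alpha, q/\alpha\}$ distinguishable by $p$-adic valuation, so $f_{\mathcal{A}}$ is squarefree; simplicity gives $f_{\mathcal{A}}=h_{\mathcal{A}}^{e}$, forcing $e=1$. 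Consequently $K:=\Q(\pi)\cong \Q[t]/(f_{\mathcal{A}})$ is a CM-field of degree $2g$ and $\mathcal{O}$ is an order in $K$.

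The core of (1) is to attach to every Deligne pair $(T,F)$ with characteristic polynomial $f_{\mathcal{A}}$ a canonical class in $\ICM(\mathcal{O})$. Because $F$ is semisimple with this characteristic polynomial, $T\otimes\Q$ is a faithful one-dimensional $K$-vector space. Fixing a $K$-linear isomorphism $T\otimes\Q \xrightarrow{\sim} K$ sending $F$ to multiplication by $\pi$, the image of $T$ is a $\Z$-lattice $I\subset K$ stable under $\pi$; invoking condition (3) of $\mathcal{L}_q$ for the endomorphism $V$ with $FV=q$, $I$ is also stable under $q/\pi$, so $I$ is a fractional $\mathcal{O}$-ideal. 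Any other choice of identification changes $I$ by multiplication by some $\lambda \in K^{*}$, producing a well-defined class $[I_A]\in\ICM(\mathcal{O})$. Conversely, every fractional $\mathcal{O}$-ideal $I\subset K$ equipped with multiplication by $\pi$ satisfies the three axioms of $\mathcal{L}_q$: it is $\Z$-free of rank $2g$ as a full lattice in $K$, the eigenvalues of $\pi$ all have absolute value $\sqrt{q}$, the $p$-adic unit condition holds because $\mathcal{A}$ is ordinary, and multiplication by $q/\pi \in \mathcal{O}$ serves as $V$. Morphisms $(I,\pi\cdot)\to(I',\pi\cdot)$ in $\mathcal{L}_q$ are $\Z[\pi]$-linear, hence $K$-linear after tensoring with $\Q$, hence given by multiplication by some $\lambda\in K$; they are isomorphisms precisely when $\lambda I = I'$, which is exactly the $\ICM(\mathcal{O})$-equivalence. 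Composing with the Deligne functor yields the bijection of (1).

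For (2), the same computation shows that the endomorphisms of $(I_A, \pi\cdot)$ in $\mathcal{L}_q$ are exactly the $\lambda\in K$ with $\lambda I_A\subseteq I_A$, i.e.\ the multiplicator ring $(I_A:I_A)$; under Deligne's equivalence this coincides with $\End_{\Fq}(A)$. The main subtlety I expect is checking that the passage $(T,F)\mapsto [I]$ is genuinely an equivalence of categories and not merely a bijection of sets — this reduces to verifying the axioms of $\mathcal{L}_q$ for fractional $\mathcal{O}$-ideals (tautological given that $\pi$ is a Weil $q$-number and $q/\pi\in\mathcal{O}$) and confirming that morphisms correspond on the nose via $K^{*}$-scaling. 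Once this compatibility is established, both parts follow immediately from the Deligne functor.
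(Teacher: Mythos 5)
Your argument is essentially a reconstruction of the proof of the result the paper only cites (Marseglia's thesis, Thm.\ 4.3.2): linearize $T\otimes\Q$ as a one-dimensional $K=\Q(\pi)$-vector space, send $T$ to a lattice stable under $\pi$ and, via $V=qF^{-1}$, under $q/\pi$, check the converse and that morphisms between such objects are multiplications by elements of $K$, so that isomorphism classes become $\ICM(\Z[\pi,q/\pi])$ and endomorphism rings become multiplicator rings. That is the right route, and it is the same one the cited reference takes. One ingredient you use silently and should state: the Deligne functor identifies the characteristic polynomial of $F(A)$ acting on $T(A)$ with the Weil polynomial of $A$ (e.g.\ via $T(A)\otimes\Z_\ell\cong T_\ell(A)$ compatibly with Frobenius); combined with Tate's isogeny theorem this is what shows that the essential image of $\mathcal{A}$ is exactly the full subcategory of pairs $(T,F)$ with characteristic polynomial $f_{\mathcal{A}}$, which is the starting point of your reduction.

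The genuine gap is your justification of the irreducibility of $f_{\mathcal{A}}$. The step ``ordinariness forces the roots to come in disjoint pairs $\{\alpha,q/\alpha\}$ distinguishable by $p$-adic valuation, so $f_{\mathcal{A}}$ is squarefree'' is a non sequitur: the pairing only rules out the real roots $\pm\sqrt{q}$, not repeated roots. For instance, the isogeny class of the square of an ordinary elliptic curve is ordinary with $f=h^2$, so squarefreeness cannot come from ordinariness alone, and your subsequent use of $f=h^e$ to force $e=1$ therefore rests on an unproved claim. The correct argument goes through Tate/Honda--Tate: for simple $\mathcal{A}$, $\End^0_{\Fq}(\mathcal{A})$ is a division algebra with center $\Q(\pi)$ and $f_{\mathcal{A}}=h_{\mathcal{A}}^e$ with $e$ its reduced degree; ordinariness makes every local invariant vanish (there is no real place of $\Q(\pi)$ because $\pi\neq\pm\sqrt{q}$, and at each place $v\mid p$ one has $v(\pi)\in\{0,v(q)\}$, so $\mathrm{inv}_v\equiv 0 \bmod 1$), hence the algebra is commutative, $e=1$, and $f_{\mathcal{A}}=h_{\mathcal{A}}$ is irreducible of degree $2g$. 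With that repair, and the Deligne-functor compatibility made explicit, your proof is complete and agrees with the argument of the cited reference.
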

\begin{proof}
See for example \cite[Thm. 4.3.2]{marseglia2016thesis}
\end{proof}

Under this bijection, our next goal is to establish a connection between varieties and certain classes of matrices.

\subsection*{Latimer and MacDuffee}
The next step is to relate fractional ideals and matrices. The following is a classical result of Latimer and MacDuffee.
\begin{theorem}[Latimer and MacDuffee, 1933, \cite{LatimerICandMC1933}]\label{th:LatimerMacDuffee}
Let $f(T) \in \mathbb{Z}[T]$ be monic irreducible of degree $n$, and $\alpha$ is a root of $f(T)$. Then we have a bijection
\[
\ICM(\Z[\alpha]) \longleftrightarrow \mathfrak{Cl}(M_{n,f}(\Z)).
\]
\end{theorem}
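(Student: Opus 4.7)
The plan is to construct maps in both directions between $\ICM(\Z[\alpha])$ and $\mathfrak{Cl}(M_{n,f}(\Z))$ and verify that they are mutually inverse. For the forward direction, I start from a fractional ideal $\mathfrak{a}$ of $\Z[\alpha]$ and use the fact that $\mathfrak{a}$ is a free $\Z$-module of rank $n$ (it is finitely generated, torsion-free, and spans $K=\Q(\alpha)$ over $\Q$ because it is a nonzero $\Z[\alpha]$-module). Fixing a $\Z$-basis $e_1,\ldots,e_n$ of $\mathfrak{a}$, multiplication by $\alpha$ preserves $\mathfrak{a}$ and is represented in this basis by some $M\in M_n(\Z)$. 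Since $f$ is irreducible of degree $n$, the minimal polynomial of multiplication by $\alpha$ on $K$ coincides with its characteristic polynomial $f$, so $M\in M_{n,f}(\Z)$. A change of $\Z$-basis conjugates $M$ by an element of $GL_n(\Z)$, and replacing $\mathfrak{a}$ by $\lambda\mathfrak{a}$ with the new basis $\lambda e_1,\ldots,\lambda e_n$ leaves $M$ unchanged. Hence the assignment $\mathfrak{a}\mapsto [M]$ descends to a well-defined map $\Phi\colon \ICM(\Z[\alpha])\to\mathfrak{Cl}(M_{n,f}(\Z))$.

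For the inverse direction, given $M\in M_{n,f}(\Z)$, I would endow $\Z^n$ with the structure of a $\Z[\alpha]$-module by letting $\alpha$ act as $M$. This is well defined since $\Z[\alpha]\cong \Z[t]/(f)$ and $f(M)=0$ by Cayley--Hamilton. Extending scalars, $\Q^n$ becomes a module over $K=\Q[t]/(f)$ of $\Q$-dimension $n$, hence of $K$-dimension one. Any nonzero $v\in\Q^n$ yields a $K$-linear isomorphism $K\to\Q^n$, $\beta\mapsto\beta v$, whose inverse realises $\Z^n$ as a finitely generated $\Z[\alpha]$-submodule $\mathfrak{a}_M$ of $K$, that is, a fractional ideal. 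Two choices of $v$ differ by an element of $K^*$, so the class of $\mathfrak{a}_M$ in $\ICM(\Z[\alpha])$ depends only on $M$; and conjugating $M$ by $U\in GL_n(\Z)$ corresponds to changing the $\Z$-basis of $\Z^n$ without altering the underlying $\Z[\alpha]$-module, hence preserves the class. This defines $\Psi\colon\mathfrak{Cl}(M_{n,f}(\Z))\to\ICM(\Z[\alpha])$.

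It then remains to verify that $\Phi$ and $\Psi$ are mutually inverse, which I expect to be essentially a matter of unravelling the definitions: given $\mathfrak{a}$ with basis $e_1,\ldots,e_n$ producing $M$, the map $\Z^n\to\mathfrak{a}$, $(a_i)\mapsto\sum_i a_i e_i$, is a $\Z[\alpha]$-module isomorphism, so $\Psi\circ\Phi$ recovers the class of $\mathfrak{a}$; conversely, the standard basis of $\Z^n$, viewed in $K$ via $\Psi$, gives a $\Z$-basis of $\mathfrak{a}_M$ in which multiplication by $\alpha$ acts as $M$. The subtlest point I anticipate is the well-definedness of $\Psi$, namely showing that different $K$-isomorphisms $\Q^n\cong K$ yield fractional ideals differing by multiplication by an element of $K^*$; once irreducibility of $f$ is used to guarantee that $\Q^n$ is one-dimensional over $K$, the argument is routine.
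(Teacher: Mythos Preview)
Your proposal is correct and follows essentially the same route as the paper's proof. The paper organizes the argument as showing a single map is surjective and injective, whereas you package the same constructions as an explicit inverse $\Psi$ and check $\Phi\circ\Psi$ and $\Psi\circ\Phi$ are identities; the underlying steps (represent $m_\alpha$ by a matrix after choosing a $\Z$-basis, and conversely turn $\Z^n$ into a $\Z[\alpha]$-module via $M$ and use that $\Q^n$ is one-dimensional over $K$ to realize it as a fractional ideal) are identical.
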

\begin{proof}
Here we follow the proof of \cite{KConradICandMC}.

For any fractional $\Z[\alpha]$-ideal $\mathfrak{a}$ in $\mathbb{Q}(\alpha)$, multiplication by $\alpha$ is a $\Z$-linear map $m_\alpha : \mathfrak{a} \rightarrow \mathfrak{a}$. 
Since $\mathfrak{a}$ has a basis of size $n$ as a $\Z$-module, after choosing a $\Z$-basis, we can  represent $m_\alpha$ by a matrix $[m_\alpha] \in M_n(\Z)$. 
Changing the $\Z$-basis of $\mathfrak{a}$ changes the matrix representation of $m_\alpha$ to a conjugate matrix. So independent of a choice of basis we can associate to a fractional ideal $\mathfrak{a}$ the conjugacy class in $M_n(\Z)$ of a matrix representation for $m_\alpha : \mathfrak{a} \rightarrow \mathfrak{a}$. All matrices $A$ in this conjugacy class satisfy $f(A) = 0$ since $f(A) = f([m_\alpha]) = [m_{f(\alpha)}] = [m_0] = 0$.

For an equivalent fractional $\Z[\alpha]$-ideal $\mathfrak{b} = x\mathfrak{a}$, where $x \in \mathbb{Q}(\alpha)^*$, its conjugacy class of matrices (the matrices representing $m_\alpha : \mathfrak{b} \rightarrow \mathfrak{b}$ with respect to $\Z$-bases of $\mathfrak{b}$) is the same as that for $\mathfrak{a}$, since the matrix for $m_\alpha$ with respect to a $\Z$-basis $\{e_1, \dots, e_n\}$ of $\mathfrak{a}$ is the same matrix as that for $m_\alpha$ with respect to the $\Z$-basis $\{xe_1, \dots, xe_n\}$ of $\mathfrak{b}$. Thus we have a well-defined function
\begin{align}\label{BijDefICMC}
\ICM(\Z[\alpha])  \rightarrow \mathfrak{Cl}(M_{n,f}(\mathbb{Z})),
\end{align}
by the rule: pick a fractional ideal in the ideal class, pick a $\Z$-basis of it, write a matrix representation for $m_\alpha$ in terms of this basis, and use the conjugacy class of that matrix. We will show this function from fractional ideal classes to conjugacy classes of matrices is a bijection.

To show surjectivity, for every $A\in M_n(\Z)$ satisfying $f(A) = 0$ we will find a fractional $\Z[\alpha]$-ideal $\mathfrak{a}$ in $\mathbb{Q}(\alpha)$ such that $A$ is the matrix representation for $m_\alpha : \mathfrak{a} \rightarrow \mathfrak{a}$ with respect to some $\Z$-basis of $\mathfrak{a}$. Let $K=\mathbb{Q}(\alpha)=\mathbb{Q}[\alpha]$. Make $\mathbb{Q}^n$ into a $K$-vector space in the following way. For $c\in K$, write $c = g(\alpha)$ for $g(T) \in \mathbb{Q}[T]$. For $v\in \mathbb{Q}^n$, set
\begin{align}\label{eqEV_ICMC}
cv=g(\alpha)v:=g(A)v.
\end{align}
This is well-defined: if $c = h(\alpha)$ for $h(T)\in \mathbb{Q}[T]$ then $g(\alpha) = h(\alpha)$, so $g(T)-h(T)$ is divisible by $f(T)$ (because $f$ is the minimal polynomial of $\alpha$ in $\mathbb{Q}[T]$, as it is monic irreducible with root $\alpha$) and therefore $g(A) = h(A)$ as matrices (since $f(A) = 0$), so $g(A)v = h(A)v$ for all $v \in \mathbb{Q}^n$. If $v\in \Z^n$ then $\alpha v = Av$ is in $\Z^n$ since $A$ has integral entries, so $\Z^n$ is a $\Z[\alpha]$-submodule of $\mathbb{Q}^n$ that is finitely generated since $\Z^n$ is already finitely generated as an abelian group. From the way we define $\mathbb{Q}^n$ as a $K$-vector space, the equation $\alpha v = Av$ tells us the matrix representation of $m_\alpha$ on $\Z^n$ with respect to the standard basis of $\Z^n$ is $A$.

Treating $\mathbb{Q}^n$ as both a $\mathbb{Q}$-vector space and as $K$-vector space (by (\ref{eqEV_ICMC})), we have
\begin{align*}
n=\dim_{\mathbb{Q}} \mathbb{Q}^n = [K:\mathbb{Q}]\dim_K \mathbb{Q}^n = n \dim_K \mathbb{Q}^n,
\end{align*}
so $\mathbb{Q}^n$ is $1$-dimensional as a $K$-vector space. That means for any nonzero $v_0 \in \mathbb{Q}^n$, the $K$-linear map $\varphi_{v_0}: K \rightarrow \mathbb{Q}^n$ given by $\varphi_{v_0}(c) = cv_0$ is an isomorphism of $1$-dimensional $K$-vector spaces. The inverse image $\varphi_{v_0}^{-1}(\Z^n)$ is a finitely generated $\Z[\alpha]$-submodule of $K$ since $\Z^n$ has these properties inside $\mathbb{Q}^n$. So $\varphi_{v_0}^{-1}(\Z^n)$ is a fractional $\Z[\alpha]$-ideal in K. Call it $\mathfrak{a}$, so
\begin{align*}
\mathfrak{a}=\varphi_{v_0}^{-1}(\Z^n)=\{ c\in K, cv_0 \in \Z^n \}.
\end{align*}
Since $A$ is the matrix representation of $m_\alpha$ on $\Z^n$ with respect to its standard basis $\{e_1, \dots, e_n\}$, $A$ is also the matrix representation of $m_\alpha$ on $\mathfrak{a}$ with respect to the $\Z$-basis $\{\varphi_{v_0}^{-1}(e_1), \dots, \varphi_{v_0}^{-1}(e_n)\}$ of $\mathfrak{a}$. We have realized $A$ as a matrix representation for $m_\alpha$ on a fractional $\Z[\alpha]$-ideal $\mathfrak{a}$, so (\ref{BijDefICMC}) is onto.

To show (\ref{BijDefICMC}) is injective, suppose $\mathfrak{a}$ and $\mathfrak{b}$ are two fractional $\Z[\alpha]$-ideals in $K$ such that the matrices $A$ and $B$ for $m_\alpha$ with respect to some $\Z$-bases of $\mathfrak{a}$ and $\mathfrak{b}$ are conjugate in $M_n(\Z)$. We want to show $\mathfrak{a}$ and $\mathfrak{b}$ are in the same ideal class: $\mathfrak{b}=x\mathfrak{a}$ for some $x\in K^*$.\\
Since A represents $m_\alpha : \mathfrak{a} \rightarrow \mathfrak{a}$ with respect to some $\Z$-basis $\mathcal{A}$ of $\mathfrak{a}$, there is a commutative diagram
\[
\begin{tikzcd}
\mathfrak{a} \arrow{r}{[\cdot]_{\mathcal{A}}} \arrow[swap]{d}{m_\alpha} & \Z^n \arrow{d}{A} \\%
\mathfrak{a} \arrow{r}{[\cdot]_{\mathcal{A}}}& \Z^n
\end{tikzcd}
\]
where the horizontal arrows are the coordinate isomorphisms that identify $\mathcal{A}$ with the standard basis of $\Z^n$. Similarly for the basis $\mathcal{B}$ of $\mathfrak{b}$ with respect to which $m_\alpha : \mathfrak{b} \rightarrow \mathfrak{b}$ has matrix representation $B$, we have a commutative diagram
\[
\begin{tikzcd}
\mathfrak{b} \arrow{r}{[\cdot]_{\mathcal{B}}} \arrow[swap]{d}{m_\alpha} & \Z^n \arrow{d}{B} \\%
\mathfrak{b} \arrow{r}{[\cdot]_{\mathcal{B}}}& \Z^n
\end{tikzcd}
\]
Since $A$ and $B$ are conjugate in $M_n(\Z)$, $B = UAU^{-1}$for some $U\in GL_n(\Z)$, so
\[
\begin{tikzcd}
\Z^n \arrow{r}{U} \arrow[swap]{d}{A} & \Z^n \arrow{d}{B} \\%
\Z^n \arrow{r}{U}& \Z^n
\end{tikzcd}
\]
commutes. Let’s put these diagrams together:
\[
\begin{tikzcd}
\mathfrak{a} \arrow{r}{[\cdot]_{\mathcal{A}}} \arrow[swap]{d}{m_\alpha} & \Z^n \arrow{r}{U} \arrow[swap]{d}{A} & \Z^n \arrow{d}{B} \arrow{r}{[\cdot]_{\mathcal{B}}^{-1}} & \mathfrak{b} \arrow[swap]{d}{m_\alpha}\\%
\mathfrak{a} \arrow{r}{[\cdot]_{\mathcal{A}}} & \Z^n \arrow{r}{U}& \Z^n \arrow{r}{[\cdot]_{\mathcal{B}}^{-1}} & \mathfrak{b}
\end{tikzcd}
\]

Each square in the diagram commutes, so the whole diagram commutes. The top and bottom maps are $\Z$-linear isomorphisms, so the common composite map $\mathfrak{a}\rightarrow\mathfrak{b}$ on the top and bottom is a $\Z$-linear isomorphism that commutes with $m_\alpha$ by commutativity of the diagram around the boundary. That implies the composite map $\mathfrak{a}\rightarrow\mathfrak{b}$ is $\Z[\alpha]$-linear, not just $\Z$-linear, so $\mathfrak{a}$ and $\mathfrak{b}$ are isomorphic as $\Z[\alpha]$-modules. Isomorphic fractional $\Z[\alpha]$-ideals are scalar multiples of each other, so $\mathfrak{b}=x\mathfrak{a}$ for some $x\in K^*$. More specifically, any $\Z[\alpha]$-linear isomorphism of fractional $\Z[\alpha]$-ideals must be multiplication by some $x\in K^*$, so the composite map $\mathfrak{a}\rightarrow\mathfrak{b}$ along the top and bottom of the above commutative diagram is multiplication by $x$. This completes the proof of the theorem.
\end{proof}

\textbf{Remark.} There is a more conceptual proof of Theorem \ref{th:LatimerMacDuffee}, but the one given above is more explicit for our purposes.

We now extend the previous result to the cases interesting for us.
\begin{Prop}\label{prop:bij-ICM-ClM}
Let $f(T) \in \mathbb{Z}[T]$ be monic irreducible of degree $n$, and $\alpha$ is a root of $f(T)$, such that $f(0)=q^{g}$. Then we have bijections
\begin{enumerate}
    \item[i] $ \ICM(\Z[\alpha,q/\alpha])  \longleftrightarrow \{ A\in \mathfrak{Cl}(M_{n,f}(\Z)),\; q^{g-1}|\tau(A)\},$
    \item[ii] $ \ICM(\Z[\alpha,q/\alpha, \sigma_\ell])  \longleftrightarrow \{ A\in \mathfrak{Cl}(M_{n,f}(\Z)),\; q^{g-1}|\tau(A),\;\ell |\tau(1-A)\},$
\end{enumerate}
for a prime $\ell|f(1)$, and where $\sigma_\ell= \frac{f(1)}{\ell(1-\alpha)}$.
\end{Prop}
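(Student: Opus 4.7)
The plan is to layer the extra algebraic structure carried by the larger orders $\Z[\alpha,q/\alpha]$ and $\Z[\alpha,q/\alpha,\sigma_\ell]$ on top of the Latimer--MacDuffee bijection $\ICM(\Z[\alpha])\longleftrightarrow\mathfrak{Cl}(M_{n,f}(\Z))$ of Theorem \ref{th:LatimerMacDuffee}. Since $\Z[\alpha]\subset\Z[\alpha,q/\alpha]\subset\Z[\alpha,q/\alpha,\sigma_\ell]$, every fractional ideal of either enlarged order is a fortiori a fractional $\Z[\alpha]$-ideal, and $K^*$-scalar equivalence is the same relation in all three settings. Thus I first exhibit the natural injections
\[
\ICM(\Z[\alpha,q/\alpha])\hookrightarrow\ICM(\Z[\alpha]),\qquad \ICM(\Z[\alpha,q/\alpha,\sigma_\ell])\hookrightarrow\ICM(\Z[\alpha]),
\]
and reduce the proof to characterising their images on the matrix side.

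For part (i), I would observe that a fractional $\Z[\alpha]$-ideal $\mathfrak{a}$ is stable under $q/\alpha$ iff the rational matrix $qA^{-1}$ representing $m_{q/\alpha}$ has integer entries, where $A\in M_{n,f}(\Z)$ is the matrix of $m_\alpha$ in a chosen $\Z$-basis of $\mathfrak{a}$. Combining $\det(A)=(-1)^{n}f(0)=(-1)^{n}q^{g}$ with the identity $A\cdot\Cof(A)^{T}=\det(A)\,I$ produces
\[
qA^{-1}=\frac{(-1)^{n}}{q^{g-1}}\,\Cof(A)^{T},
\]
so integrality of $qA^{-1}$ is precisely the condition $q^{g-1}\mid\gcd(\Cof(A))=\tau(A)$. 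Descent to conjugacy classes is automatic because $\tau$ was already extended to $\mathfrak{Cl}(M_{n,f}(\Z))$ in the preliminaries.

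For part (ii), the additional demand is $\sigma_\ell\mathfrak{a}\subset\mathfrak{a}$. I would write $f(T)-f(1)=(T-1)h(T)$ for some $h\in\Z[T]$ and specialise to $T=\alpha$, yielding $(1-\alpha)h(\alpha)=f(1)$ and hence $\sigma_\ell=h(\alpha)/\ell$. The matrix of $h(\alpha)$ on $\mathfrak{a}$ is $h(A)$, and comparing $(I-A)h(A)=f(1)\,I$ with $(I-A)\Cof(I-A)^{T}=\det(I-A)\,I=f(1)\,I$, together with the invertibility of $I-A$ over $\Q$ (since $f(1)\neq 0$ because $\ell\mid f(1)$ and $\ell\neq 0$), delivers $h(A)=\Cof(I-A)^{T}$. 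Consequently $\sigma_\ell$ acts on $\mathfrak{a}$ by $\Cof(I-A)^{T}/\ell$, whose integrality is equivalent to $\ell\mid\tau(1-A)$. Conjoined with the condition from (i), this is exactly the defining condition of the right-hand side in (ii).

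The main obstacle I anticipate is ensuring that the natural injection of ideal class monoids is genuinely a bijection onto the subset cut out by the matrix conditions. This splits into checking that the stability conditions $(q/\alpha)\mathfrak{a}\subset\mathfrak{a}$ and $\sigma_\ell\mathfrak{a}\subset\mathfrak{a}$ are invariant under $K^*$-scaling of $\mathfrak{a}$---so that every $\Z[\alpha]$-ideal class satisfying them promotes canonically to an $\ICM$-class of the larger order---and that the divisibility conditions on $\tau(A)$ and $\tau(1-A)$ are true class invariants on $\mathfrak{Cl}(M_{n,f}(\Z))$; both amount to bookkeeping once the preliminaries and the computation above are in place.
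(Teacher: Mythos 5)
Your proposal is correct and follows essentially the same route as the paper: both pass through the Latimer--MacDuffee bijection and translate stability of a fractional ideal under $q/\alpha$ (resp.\ $\sigma_\ell$) into integrality of $qA^{-1}=\pm\Cof(A)^{T}/q^{g-1}$ (resp.\ $\Cof(I-A)^{T}/\ell$), using $\det(A)=\pm q^{g}$ and $\det(I-A)=f(1)$; your detour through $h(T)$ with $f(T)-f(1)=(T-1)h(T)$ is just a slightly more explicit way of writing $(I-A)^{-1}=\Cof(I-A)^{T}/f(1)$, and your attention to the $K^*$-invariance of the stability conditions makes explicit a point the paper leaves implicit. One tiny repair: $f(1)\neq 0$ should be justified by irreducibility of $f$ together with $f(0)=q^{g}$ (so $1$ is not a root), not by ``$\ell\mid f(1)$'', since every prime divides $0$.
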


\begin{proof}
We consider the bijection given in the proof of Theorem \ref{th:LatimerMacDuffee}, as well as the same notations. A fractional $\ICM(Z[\alpha])$-ideal $\mathfrak{a}$ belongs to $\ICM(Z[\alpha, q/\alpha])$ if and only if $m_{q\alpha^{-1}}: \mathfrak{a}\rightarrow\mathfrak{a}$ is well defined; under the bijection \ref{BijDefICMC} (within the proof of Th. \ref{th:LatimerMacDuffee}), this corresponds to matrices $A\in M_{n,f}$ such that $qA^{-1}\in M_n(\Z)$. Then [i] follows from the relation $A^{-1}=\Cof(A)^{t}/\det(A)$ (where $\det(A)=q^{g}$) and the definition of $\tau$. The same for the [ii]
\begin{align*}
    \ICM(Z[\alpha, q/\alpha, f(1)/\ell(1-\alpha)]),
\end{align*}
by observing that $\det(1-A)=f(1)$.
\end{proof}

\subsection*{Cyclic varieties}
An abelian variety $A$ defined over $k$ is said to by \textbf{cyclic} if its groups $A(k)$ of rational points is cyclic. Otherwise, $A$ is said to be \textbf{not cyclic}.

For an elliptic curve $E$, we have that $E(\Fq)$ is not cyclic if and only if there exist a prime $\ell\neq p$ such that $E[\ell]\subseteq E(\Fq)$. This is not the case for higher dimensional varieties, however,  a slightly different criterion holds:
\begin{Lemma}\label{lemma:endom_criterion}
If $A$ is simple, then
\begin{align*}\label{EndCriterion}
A(\Fq) \text{ is not cyclic } \Longleftrightarrow \exists \varphi\in \End_{\Fq}(A), \; [f_A(1)/\ell] =\varphi\circ (1-F)
\end{align*}
for some prime divisor $\ell|f_A(1)$. If such $\varphi$ does exist, it must belong to the center of $\End_{\Fq}(A)$.
\end{Lemma}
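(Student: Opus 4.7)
The plan is to work directly with $A(\Fq) = \ker(1-F)$ and the factorisation property of isogenies, bypassing Deligne's equivalence. Set $N := f_A(1) = |A(\Fq)|$ throughout.

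For the forward direction, I would assume $A(\Fq)$ is not cyclic and write it in invariant-factor form $\Z/e_1 \oplus \cdots \oplus \Z/e_r$ with $e_1 \mid \cdots \mid e_r$ and $r \geq 2$, then pick any prime $\ell$ dividing $e_{r-1}$. Since $\ell$ divides $e_1\cdots e_{r-1} = N/e_r$, the exponent $e_r$ of $A(\Fq)$ divides $N/\ell$, so the multiplication map $[N/\ell]$ vanishes on $A(\Fq) = \ker(1-F)$. The isogeny $1-F$ over $\Fq$ has degree $N = |A(\Fq)|$, so its kernel has as many $\overline{\Fq}$-points as its scheme-theoretic order; it is therefore étale and coincides with the constant $\Fq$-group scheme $A(\Fq)$. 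The universal property of the quotient by a finite étale subgroup scheme then produces a unique $\varphi \in \End_{\Fq}(A)$ with $[N/\ell] = \varphi\circ (1-F)$.

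The reverse direction is immediate: if $[N/\ell] = \varphi\circ(1-F)$ with $\varphi \in \End_{\Fq}(A)$, then $[N/\ell]$ is zero on $A(\Fq) = \ker(1-F)$, so the exponent of $A(\Fq)$ divides $N/\ell < N = |A(\Fq)|$; hence $A(\Fq)$ has at least two non-trivial invariant factors and is not cyclic. For the centrality statement, I observe that $F$ commutes with every element of $\End_{\Fq}(A)$, so $1-F$ lies in the centre; in $\End^0_{\Fq}(A)$ — which for simple $A$ is a division algebra with centre $\Q(\pi)$ — the isogeny $1-F$ is a unit, so $\varphi = (N/\ell)(1-F)^{-1}$ lies in $\Q(\pi)$, and therefore $\varphi \in \End_{\Fq}(A)\cap \Q(\pi)$ is in the centre of $\End_{\Fq}(A)$.

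The main obstacle is making the factorisation step rigorous: concretely, verifying that $\ker(1-F)$ is étale (which follows from the classical identity $\deg(1-F) = |A(\Fq)| = f_A(1)$, so the scheme-theoretic order and the geometric-point count agree) and that the resulting $\varphi$ is automatically defined over $\Fq$ by uniqueness of the factorisation. The remainder of the argument is elementary manipulation of invariant factors of a finite abelian group.
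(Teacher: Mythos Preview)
Your argument is correct and follows the same skeleton as the paper: not cyclic $\Leftrightarrow$ the exponent of $A(\Fq)$ divides $N/\ell$ for some prime $\ell\mid N$ $\Leftrightarrow$ $\ker(1-F)\subset\ker[N/\ell]$ $\Leftrightarrow$ $[N/\ell]$ factors through $1-F$, the last step using separability of $1-F$. The only cosmetic differences are that the paper cites Mumford for the factorisation (obtaining $\varphi$ first over $\overline{\Fq}$ and then descending), whereas you invoke the \'etale quotient directly over $\Fq$; and for centrality the paper cancels $1-F$ from $(\varphi\psi-\psi\varphi)\circ(1-F)=0$, while you invert $1-F$ in the division algebra $\End^0_{\Fq}(A)$---both amount to the same cancellation.
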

\begin{proof}
This follows from the fact that $A(\Fq)$ is not cyclic if and only if $A(\Fq)=\ker (1-F) \subset \ker ([f_A(1)/\ell]) =A[f_A(1)/\ell]$ for some $\ell|f_A(1)$, where $F$ denotes the Frobenius endomorphism of $A$. From Theorem 4 in \cite{Mumford1999}, since $1-F$ is separable, $\ker (1-F) \subset \ker ([f_A(1)/\ell])$ if and only if such $\varphi$ does exist over $\overline{\mathbb{F}}_q$. Since $A$ is simple, if such $\varphi$ does exist, it must be in the center of $\End_{\overline{\mathbb{F}}_q}(A)$: if $\psi\in \End_{\overline{\mathbb{F}}_q}(A)$, multiplying the previous equation in both sides and subtracting them, we have $0=(\varphi\psi -\psi\varphi)\circ (1-F)$, thus $\varphi$ commutes with $\psi$. Using a similar argument, we can show that this endomorphism is in fact defined over $\overline{\mathbb{F}}_q$.
\end{proof}

Note that in this case we do not require $\ell\neq p$, however this criterion depends on the cardinality $f_A(1)$ of $A(\Fq)$. Lemma \ref{lemma:endom_criterion} implies that abelian varieties with few endomorphisms are more likely to be cyclic. The group structure of an abelian variety is not necessarily determined by its endomorphism ring, but from Lemma \ref{lemma:endom_criterion} it follows that the property of being cyclic or not depends on its endomorphism ring, and thus, it is an invariant of it.

Counting cyclic varieties in an ordinary isogeny class is the same as counting some fractional ideals in the center of $\End^0_{\Fq}(A)$.

\textbf{Remark.} In the non-ordinary case, we still can assign to every abelian variety $A$, an order $\mathcal{O}$, which represents the center of its endomorphism ring, and we can know if $A$ is cyclic or not by studying the order $\mathcal{O}$. When the base field is prime, there is a version of Deligne's functor given in \cite{centeleghe2015}. 

\subsection*{Proof of Theorem \ref{thm:bijection}}
The statement for $\mathfrak{m}_{f,1}$ follows immediately from the first bijection of the Proposition \ref{prop:bij-ICM-ClM} and the first bijection of Theorem \ref{Th:Deligne} (the Deligne's equivalence). Now, for $A\in\mathcal{A}$, denote by $I_A$ its associated fractional ideal (from Theorem \ref{Th:Deligne}), and by $[I_A]$ its associated class of matrices (from Theorem \ref{th:LatimerMacDuffee}), then using Lemma \ref{lemma:endom_criterion} we have:
\begin{align*}
A \text{ is not cyclic } &\Longleftrightarrow f(1)/\ell(1-F)\in \End_{\Fq} (A),\quad \ell|f(1) \text{ prime}\\
&\Longleftrightarrow (I_A:I_A)\supset Z[\alpha, q/\alpha, f(1)/\ell(1-\alpha)],\quad \ell|f(1) \text{ prime}\\
&\Longleftrightarrow I_A\in \ICM(Z[\alpha, q/\alpha, f(1)/\ell(1-\alpha)]),\quad \ell|f(1) \text{ prime}\\
&\Longleftrightarrow [I_A]\in \mathfrak{Cl}(M_{2g,f}(\mathbb{Z})),\; q^{g-1}|\tau([I_A]), (\tau(1-[I_A]),f(1))\geq 2,
\end{align*}
and the result follows.

\section*{Acknowledgement}
This work is part of my PhD Dissertation. I would like to thank my advisor Serge Vl\u{a}du\c{t} for very fruitful discussions and for his very useful comments and suggestions.

\bibliography{ajgiangreco-CAVFFOIC}

\begin{thebibliography}{10}

\bibitem{centeleghe2015}
{\sc T.~Centeleghe and J.~Stix}, {\em Categories of abelian varieties over
  finite fields, {I}: Abelian varieties over $\mathbb{F}_{p}$}, Algebra \&
  Number Theory, 9 (2015), pp.~225--265.

\bibitem{KConradICandMC}
{\sc K.~Conrad}, {\em Ideal classes and matrix conjugation over $\mathbb{Z}$}.
\newblock Available at \\ \url{https://kconrad.math.uconn.edu/blurbs/}.

\bibitem{deligne1969varietes}
{\sc P.~Deligne}, {\em Vari{\'e}t{\'e}s ab{\'e}liennes ordinaires sur un corps
  fini}, Inventiones mathematicae, 8 (1969), pp.~238--243.

\bibitem{GIANGRECOMAIDANA2019139}
{\sc A.~J. Giangreco-Maidana}, {\em On the cyclicity of the rational points
  group of abelian varieties over finite fields}, Finite Fields and Their
  Applications, 57 (2019), pp.~139 -- 155.

\bibitem{GIANGRECOMAIDANA2020101628}
\leavevmode\vrule height 2pt depth -1.6pt width 23pt, {\em Local cyclicity of
  isogeny classes of abelian varieties defined over finite fields}, Finite
  Fields and Their Applications, 62 (2020), p.~101628.

\bibitem{LatimerICandMC1933}
{\sc C.~G. Latimer and C.~C. MacDuffee}, {\em A correspondence between classes
  of ideals and classes of matrices}, Annals of Mathematics, 34 (1933),
  pp.~313--316.

\bibitem{marseglia2016thesis}
{\sc S.~Marseglia}, {\em Isomorphism classes of abelian varieties over finite
  fields}.
\newblock Available at
  \url{https://www.diva-portal.org/smash/get/diva2:929747/FULLTEXT01.pdf}.

\bibitem{mumford1970abelian}
{\sc D.~Mumford}, {\em Abelian varieties}, vol.~5 of Tata Institute of
  fundamental research studies in mathematics, Oxford University Press, 1970.

\bibitem{Mumford1999}
{\sc D.~Mumford}, {\em The Red Book of Varieties and Schemes}, Springer Berlin
  Heidelberg, 1999.

\bibitem{ruck1987note}
{\sc H.-G. R\"{u}ck}, {\em A note on elliptic curves over finite fields},
  Mathematics of Computation, 49 (1987), pp.~301--304.

\bibitem{Rybakov2010}
{\sc S.~Rybakov}, {\em The groups of points on abelian varieties over finite
  fields}, Central European Journal of Mathematics, 8 (2010), pp.~282--288.

\bibitem{SCHOOF1987183}
{\sc R.~Schoof}, {\em Nonsingular plane cubic curves over finite fields},
  Journal of Combinatorial Theory, Series A, 46 (1987), pp.~183 -- 211.

\bibitem{tate1966}
{\sc J.~Tate}, {\em Endomorphisms of abelian varieties over finite fields},
  Inventiones mathematicae, 2 (1966), pp.~134--144.

\bibitem{tsfasman1985group}
{\sc M.~A. Tsfasman}, {\em Group of points of an elliptic curve over a finite
  field}, Theory of numbers and its applications, {Tbilisi},  (1985),
  pp.~286--287.

\bibitem{VLADUT199913}
{\sc S.~G. Vl{\u{a}}du{\c{t}}}, {\em Cyclicity statistics for elliptic curves
  over finite fields}, Finite Fields and Their Applications, 5 (1999), pp.~13
  -- 25.

\bibitem{VLADUT1999354}
\leavevmode\vrule height 2pt depth -1.6pt width 23pt, {\em On the cyclicity of
  elliptic curves over finite field extensions}, Finite Fields and Their
  Applications, 5 (1999), pp.~354 -- 363.

\bibitem{voloch1988note}
{\sc J.~F. Voloch}, {\em A note on elliptic curves over finite fields},
  Bulletin de la Soci{\'e}t{\'e} Math{\'e}matique de France, 116 (1988),
  pp.~455--458.

\bibitem{Waterhouse1969}
{\sc W.~C. Waterhouse}, {\em Abelian varieties over finite fields}, Annales
  scientifiques de l'\'Ecole Normale Sup\'erieure, 2 (1969), pp.~521--560.

\end{thebibliography}
\bibliographystyle{siam}
\end{document}